\numberwithin{equation}{section}
\newtheorem{thm}{Theorem}
\newtheorem{cor}{Corollary}
\newtheorem{lem}{Lemma}
\theoremstyle{definition}
\theoremstyle{remark}
\newtheorem{problem}{Problem}
\newtheorem*{remark}{Remark}
\begin{document}

\title[centered translation invariant density bases]{Tauberian constants associated to centered translation invariant density bases}
\author{Paul Hagelstein}
\address{P. H.: Department of Mathematics, Baylor University, Waco, Texas 76798}
\email{\href{mailto:paul_hagelstein@baylor.edu}{paul\!\hspace{.018in}\_\,hagelstein@baylor.edu}}
\thanks{P. H. is partially supported by a grant from the Simons Foundation (\#521719 to Paul Hagelstein).}

\author{Ioannis Parissis}
\address{I.P.: Departamento de Matem\'aticas, Universidad del Pais Vasco, Aptdo. 644, 48080
Bilbao, Spain and Ikerbasque, Basque Foundation for Science, Bilbao, Spain}
\email{\href{mailto:ioannis.parissis@ehu.es}{ioannis.parissis@ehu.es}}
\thanks{I. P. is supported by grant  MTM2014-53850 of the Ministerio de Econom\'ia y Competitividad (Spain), grant IT-641-13 of the Basque Government, and IKERBASQUE}

\subjclass[2010]{Primary 42B25}
\keywords{density basis, differentiation basis, differentiation of integrals, maximal operators}

\begin{abstract}
This paper provides a necessary and sufficient condition on Tauberian constants associated to a centered translation invariant differentiation basis so that the basis is a density basis.   More precisely, given $x \in \mathbb{R}^n$, let $\mathcal{B} = \cup_{x \in \mathbb{R}^n} \mathcal{B}(x)$ be a collection of bounded open sets in $\mathbb{R}^n$ containing $x$.   Suppose moreover that these collections are translation invariant in the sense that, for any two points $x$ and $y$ in $\mathbb{R}^n$ we have that $\mathcal{B}(x + y) = \{R + y : R \in \mathcal{B}(x)\}.$   Associated to these collections is a maximal operator $M_{\mathcal{B}}$ given by
$$M_{\mathcal{B}}f(x) \coloneqq \sup_{R \in \mathcal{B}(x)} \frac{1}{|R|} \int_R |f|\;.$$  The Tauberian constants $C_{\mathcal{B}}(\alpha)$ associated to $M_{\mathcal{B}}$ are given by
$$C_{\mathcal{B}}(\alpha) \coloneqq \sup_{E \subset \mathbb{R}^n \atop 0 < |E| < \infty} \frac{1}{|E|}|\{x \in \mathbb{R}^n :\, M_{\mathcal{B}}\chi_E(x) > \alpha\}|\;.$$
Given $0 < r < \infty$, we set $\mathcal{B}_r(x) \coloneqq \{R \in \mathcal{B}(x) : \textup{diam } R < r\}$, and let $\mathcal{B}_r \coloneqq \cup_{x \in \mathbb{R}^n} \mathcal{B}_r (x).$  We prove that $\mathcal{B}$ is a density basis if and only if, given $0 < \alpha < \infty$, there exists $ r = r(\alpha) >0$ such that $C_{\mathcal{B}_r}(\alpha) < \infty$.   Subsequently, we construct a centered translation invariant density basis $\mathcal{B} = \cup_{x \in \mathbb{R}^n} \mathcal{B}(x)$ such that there does not exist any $0 < r$ satisfying  $C_{\mathcal{B}_{r}}(\alpha) < \infty$ for all $0 < \alpha < 1$.   
\end{abstract}

\maketitle

\section{Introduction}
This paper is concerned with the classical topic of ascertaining whether or not a given collection of sets in $\mathbb{R}^n$ constitutes a density basis.   A quick review of basic terminology in this regard is called for.  Given a point $x \in \mathbb{R}^n$, let $\mathcal{B}(x)$ be a nonempty collection of bounded open sets in $\mathbb{R}^n$ that contain $x$. The collection $\mathcal{B} \coloneqq \cup_{x \in \mathbb{R}^n}\mathcal{B}(x)$ is said to be a \emph{differentiation basis} if for every $x$ the collection $\mathcal{B}(x)$ contains sets of arbitrarily small diameter. On the other hand, the collection $\mathcal{B}$ is said to be a \emph{density basis} if, for every measurable set $E \subset \mathbb{R}^n$, and for almost every $x \in \mathbb{R}^n$, we have that
$$\chi_E(x) = \lim_{j \rightarrow \infty}\frac{1}{|R_j|}\int_{R_j}\chi_E $$
holds for every sequence of sets $\{R_j\}$ in $\mathcal{B}(x)$ whose diameters tend to 0.  For example, if $\mathcal{B}(x)$ is the collection of balls centered at $x$, then $\mathcal{B}$ is a density basis.  On the other hand, if $n \geq 2$ and $\mathcal{B}(x)$ is the collection of all rectangles centered at $x$, then $\mathcal{B}$ is not a density basis.  Proofs of these results may be found in, for instance, \cite{SteinHA}.

\emph{Maximal operators and Tauberian constants} provide useful tools for determining whether or not certain types of differentiation bases are in fact density bases.  Given a differentiation basis $\mathcal{B} = \cup_{x \in \mathbb{R}^n} \mathcal{B}(x)$ of sets in $\mathbb{R}^n$, the maximal operator $M_\mathcal{B}$ is defined by
$$M_{\mathcal{B}}f(x) \coloneqq \sup_{x \in R \in \mathcal{B}(x)} \frac{1}{|R|}\int_R|f|\;.$$
 Given a constant $0 < \alpha < 1$, the associated Tauberian constant $C_\mathcal{B}(\alpha)$ is defined by
$$C_{\mathcal{B}}(\alpha) \coloneqq \sup_{E \subset \mathbb{R}^n \atop 0 < |E| < \infty} \frac{1}{|E|}|\{x \in \mathbb{R}^n : M_{\mathcal{B}}\chi_E(x) > \alpha\}|\;.$$

A basis of the above type is said to be ``centered'' and the associated maximal operator is  considered to be centered as well.  It is important to note that the term ``centered'' does not necessarily relate to a geometric center of sets, but rather to the existence of a distinguished point in each set. Now, a \emph{Busemann-Feller basis} $\mathcal{B}$ is a differentiation basis such that, if $x \in R \in \mathcal{B}(y)$ then $R \in \mathcal{B}(x)$.  Busemann-Feller bases are ``uncentered'', and the maximal operator $M_\mathcal{B}$ associated to a Busemann-Feller basis admits a straightforward presentation
     $$M_{\mathcal{B}}f(x) \coloneqq \sup_{x \in R \in \mathcal{B}}\frac{1}{|R|}\int_R |f|\;.$$    To illustrate this point, the uncentered Hardy-Littlewood maximal operator acting on a function $f$ at $x$ provides the supremum of the average of $f$ over all balls containing $x$, but the centered Hardy-Littlewood maximal operator only averages over balls \emph{centered} at $x$.  The uncentered Hardy-Littlewood maximal operator thus corresponds to a Busemann-Feller basis; the centered one does not.  We remark that the commonly used notation $M_\mathcal{B}f$ is somewhat ambiguous; by context one must recognize whether the maximal operator is associated to a centered or uncentered basis.

     A basis $\mathcal{B} = \cup_{x\in\mathbb{R}^n}\mathcal{B}(x)$ is said to be \emph{translation invariant} provided $R \in \mathcal{B}(0)$ if and only if $x + R \in \mathcal{B}(x)$.
     A Busemann-Feller basis $\mathcal{B}$ is said to be \emph{homothecy invariant} provided that $R \in \mathcal{B}$ implies that any translate or dilate of $R$ also lies in $\mathcal{B}$.  The following result is a cornerstone of the theory of differentiation of integrals that enables one to determine whether or not a homothecy invariant  Busemann-Feller basis is in fact a density basis.

\begin{thm}[Busemann and Feller, 1934]\label{t1}
Let $\mathcal{B}$ be a homothecy invariant Busemann-Feller differentiation basis of sets in $\mathbb{R}^n$.  Then $\mathcal{B}$ is a density basis if and only if $C_\mathcal{B}(\alpha) < \infty$ for every $0 < \alpha < 1$.
\end{thm}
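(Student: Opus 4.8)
The plan is to prove both implications. It is convenient to record, for a measurable set $A$ and $x\in\mathbb{R}^n$, the upper derivative
\[
\overline{D}\Big(\textstyle\int_A,x\Big) \coloneqq \sup \limsup_{j\to\infty}\frac{1}{|R_j|}\int_{R_j}\chi_A,
\]
the supremum being taken over all sequences $\{R_j\}\subset\mathcal{B}(x)$ with $\textup{diam }R_j\to 0$; clearly $\overline{D}(\int_A,x)\le M_{\mathcal{B}}\chi_A(x)$. Since $\frac{1}{|R|}\int_R\chi_A = 1-\frac{1}{|R|}\int_R\chi_{A^c}$, the lower derivative satisfies $\underline{D}(\int_A,x)=1-\overline{D}(\int_{A^c},x)$, so $\mathcal{B}$ is a density basis precisely when $\overline{D}(\int_A,\cdot)=\chi_A$ almost everywhere for every measurable $A$ (applying this to $A^c$ then controls the lower derivative). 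By localizing to large balls one may assume throughout that $A$ is bounded. Finally, every continuous $g$ is differentiated by $\mathcal{B}$: if $\textup{diam }R_j\to0$ and $x\in R_j$ then $\frac{1}{|R_j|}\int_{R_j}g\to g(x)$ by continuity, so $\overline{D}(\int g,x)=g(x)$.

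For sufficiency, I would assume $C_{\mathcal{B}}(\alpha)<\infty$ for every $\alpha\in(0,1)$ and fix a bounded measurable $A$ and $\eta>0$. Choose a continuous $g$ with $0\le g\le 1$ and $\|\chi_A-g\|_{L^1}<\delta$, and set $h\coloneqq\chi_A-g$, so that $|h|\le 1$ and $\|h\|_{L^1}<\delta$. Writing $\frac{1}{|R|}\int_R\chi_A=\frac{1}{|R|}\int_R g+\frac{1}{|R|}\int_R h$, using $|\frac{1}{|R|}\int_R h|\le M_{\mathcal{B}}|h|(x)$, and using that $g$ is differentiated, one gets $|\overline{D}(\int_A,x)-g(x)|\le M_{\mathcal{B}}|h|(x)$, hence
\[
\Big|\overline{D}\Big(\textstyle\int_A,x\Big)-\chi_A(x)\Big|\le |h(x)|+M_{\mathcal{B}}|h|(x).
\]
Thus $\{|\overline{D}(\int_A,\cdot)-\chi_A|>\eta\}\subset\{|h|>\eta/2\}\cup\{M_{\mathcal{B}}|h|>\eta/2\}$. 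The first set has measure at most $\frac{2}{\eta}\delta$. For the second, put $u\coloneqq|h|$ and $S\coloneqq\{u>\eta/4\}$, so $|S|\le\frac{4}{\eta}\delta$; since $u\chi_{S^c}\le\eta/4$ we have $M_{\mathcal{B}}(u\chi_{S^c})\le\eta/4$, and since $u\le1$ we have $u\chi_S\le\chi_S$, whence
\[
\{M_{\mathcal{B}}u>\eta/2\}\subset\{M_{\mathcal{B}}(u\chi_S)>\eta/4\}\subset\{M_{\mathcal{B}}\chi_S>\eta/4\},
\]
which has measure at most $C_{\mathcal{B}}(\eta/4)\,|S|\le C_{\mathcal{B}}(\eta/4)\frac{4}{\eta}\delta$. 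Letting $\delta\to0$ forces $|\{|\overline{D}(\int_A,\cdot)-\chi_A|>\eta\}|=0$, and intersecting over $\eta=1/m$ yields $\overline{D}(\int_A,\cdot)=\chi_A$ a.e., so $\mathcal{B}$ is a density basis. It is worth noting that this implication uses only finiteness of the single constant $C_{\mathcal{B}}(\eta/4)$ for each $\eta$, and neither homothecy nor the Busemann–Feller property.

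For necessity I would argue by contraposition: assuming $C_{\mathcal{B}}(\alpha_0)=\infty$ for some $\alpha_0\in(0,1)$, I would construct a bounded set that is not differentiated. Infinitude of the Tauberian constant yields sets $G_k$ with $0<|G_k|<\infty$ and $|\{M_{\mathcal{B}}\chi_{G_k}>\alpha_0\}|\ge 4^k|G_k|$. Using homothecy invariance I would rescale each configuration $(G_k,\{M_{\mathcal{B}}\chi_{G_k}>\alpha_0\})$ to an arbitrarily small scale $\varepsilon_k\downarrow 0$: dilation leaves averages, and hence the overflow ratio $4^k$, unchanged, while forcing the averaging sets that witness the overflow to have diameter tending to $0$. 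Translating the rescaled copies into disjoint small cubes and letting $|G_k|$ decay rapidly makes $E\coloneqq\bigcup_k G_k$ bounded with $|E|<\infty$; by monotonicity $M_{\mathcal{B}}\chi_E>\alpha_0$ on each rescaled overflow set $O_k$, now realized by sets of diameter at most $\varepsilon_k$. Consequently every point lying in infinitely many $O_k$ has $\overline{D}(\int_E,\cdot)\ge\alpha_0$, and one arranges the placement so that $\limsup_k O_k$ has positive measure while meeting $E$ in a null set; at such points $\overline{D}(\int_E,\cdot)\ge\alpha_0>0=\chi_E$, so $\mathcal{B}$ is not a density basis.

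The hard part will be the last step of the necessity argument: guaranteeing that the overflow regions $O_k$ overlap on a set of positive measure while keeping $\sum_k|G_k|<\infty$. The overflow hypothesis controls only the measures $|O_k|$ relative to $|G_k|$, not the shapes or locations of the $O_k$, so a naive placement need not produce a positive-measure $\limsup$. The remedy I anticipate is to exploit homothecy invariance quantitatively—dilating each configuration so that $O_k$ fills a fixed proportion of a common cube $Q_0$ and iterating a reverse Borel–Cantelli type packing—so that a definite proportion of $Q_0$ is covered at arbitrarily fine scales. This is the delicate point where the homothecy invariance in the hypothesis is essential, in contrast to the sufficiency direction.
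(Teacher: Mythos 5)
Your sufficiency direction is essentially correct and is the standard argument (it is, in substance, what de Guzm\'an's Chapter III gives and what the paper's Lemma \ref{l1} invokes): approximate $\chi_A$ in $L^1$ by a continuous $g$, note that continuous functions are differentiated by any differentiation basis, and absorb the error $u=|\chi_A-g|$ via the inclusion $\{M_{\mathcal{B}}u>\eta/2\}\subset\{M_{\mathcal{B}}\chi_{\{u>\eta/4\}}>\eta/4\}$, whose measure is controlled by $C_{\mathcal{B}}(\eta/4)$. You are also right that this half uses neither homothecy invariance nor the Busemann--Feller property. (Note that the paper itself does not prove Theorem \ref{t1}; it cites Busemann--Feller and de Guzm\'an. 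The closest in-paper argument is Lemma \ref{l2}, which proves the necessity half in the centered translation-invariant setting.)

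The necessity direction, however, has a genuine gap, which you yourself flag: you never actually produce a positive-measure set of points lying in infinitely many overflow sets $O_k$. The missing tool is not a ``packing'' argument but the translation form of the Borel--Cantelli lemma (used by Stein in \emph{On limits of sequences of operators} and by the paper in Lemma \ref{l2}): if measurable sets $O_j$ all lie in a fixed cube and $\sum_j|O_j|=\infty$, then one can choose translations $\tau_j$ so that almost every point lies in infinitely many $\tau_jO_j$. To set this up one must first localize: pigeonhole the overflow set into a unit cube $Q$ (which requires first restricting to basis elements of small diameter, since for general open sets small measure does not force small diameter, so $\{M_{\mathcal{B}}\chi_{G_k}>\alpha_0\}\cap Q$ need not be determined by $G_k\cap 3Q$ without that restriction), then use homothecy invariance to shrink the witnessing sets to diameter $\leq 2^{-k}$ while preserving the overflow ratio; this produces sets $R_{k,j}$ with $\sum_j|R_{k,j}|<2^{-k}$ yet $\sum_j|\{M\chi_{R_{k,j}}>\alpha_0\}|=\infty$, to which the lemma applies. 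Two further points in your sketch work against you: placing the rescaled copies in \emph{disjoint} small cubes is the opposite of what is needed (the overflow sets must be translated into a \emph{common} cube to make $\limsup_kO_k$ have positive measure), and the condition that $\limsup_kO_k$ meet $E$ in a null set is both hard to arrange and unnecessary --- since $|E|<\infty$, the complement of $E$ has infinite measure and almost every one of its points has upper derivative at least $\alpha_0>0$, which already contradicts the density property.
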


The original proof of this theorem may be found in \cite{busemannfeller1934}; the interested reader is highly encouraged to read the presentation of this result in \cite{Gu}*{Chapter III}.

Somewhat surprisingly, at the present time we have found no satisfactory analogue of this theorem in the case that $\mathcal{B}$ is simply a \emph{Busemann-Feller translation invariant} basis.  This in fact is a highlighted problem in de Guzman's \emph{Differentiation of Integrals in} $\mathbb{R}^n$.   In considering this problem, we recognized that the analogue of this theorem in the context of centered translation invariant  bases was surprisingly also unknown.   The purpose of this paper is to provide an analogue of the Busemann-Feller Theorem in the context of centered translation invariant bases, and in that regard we prove the following.

\begin{thm}\label{t2}
Let $\mathcal{B} = \cup_{x \in \mathbb{R}^n}\mathcal{B}(x)$ be a centered translation invariant  differentiation basis.  Given $r > 0$ and $x \in \mathbb{R}^n$, let $\mathcal{B}_r(x) = \{R \in \mathcal{B}(x) : \textup{diam }R < r\}\;$ and set $\mathcal{B}_r = \cup_{x \in \mathbb{R}^n}\mathcal{B}_r(x)$.  Then $\mathcal{B}$ is a density basis if and only if, given $0 < \alpha < 1$, there exists $ r = r(\alpha) >0 $ such that $C_{\mathcal{B}_r}(\alpha) < \infty$.
\end{thm}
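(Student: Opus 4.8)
The plan is to treat the two implications separately, with the reverse (sufficiency) implication being a routine adaptation of the Busemann--Feller argument and the forward (necessity) implication carrying the genuinely new content.

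For sufficiency, assume that for every $\beta\in(0,1)$ there is $r=r(\beta)$ with $C_{\mathcal B_r}(\beta)<\infty$. Since differentiation at a point depends only on the behaviour of $E$ in an arbitrarily small neighbourhood, one may assume $|E|<\infty$ after intersecting with a large ball. Fix $\beta$, put $r=r(\beta)$, and choose an open $H\supseteq E$ and a compact $K\subseteq E$ with $|H\setminus E|,|E\setminus K|<\epsilon$. For $x\in E\subseteq H$ every $R\in\mathcal B(x)$ of sufficiently small diameter lies in $H$ (here centeredness, $x\in R$, is used), so $\frac{|E\cap R|}{|R|}\ge 1-\frac{|(H\setminus E)\cap R|}{|R|}$; since any sequence of diameters tending to $0$ eventually lies below $r$, the lower derivative satisfies $\underline D(\int\chi_E,x)\ge 1-M_{\mathcal B_r}\chi_{H\setminus E}(x)$. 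Hence $\{x\in E:\underline D<1-\beta\}\subseteq\{M_{\mathcal B_r}\chi_{H\setminus E}>\beta\}$, whose measure is at most $C_{\mathcal B_r}(\beta)\,\epsilon$; letting $\epsilon\to0$ shows this set is null. The symmetric argument with $K$ shows $\{x\notin E:\overline D(\int\chi_E,x)>\beta\}$ is null. Taking $\beta\to0$ (with the associated $r(\beta)$) yields differentiation almost everywhere.

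For necessity I argue by contraposition: assume there is $\alpha\in(0,1)$ with $C_{\mathcal B_r}(\alpha)=\infty$ for every $r>0$, and produce a set whose integral is not differentiated on a set of positive measure. Fix a cube $Q$ and a sequence $r_k\downarrow0$. The first step is a \emph{localization} lemma: given $C_{\mathcal B_{r_k}}(\alpha)=\infty$, pick $E$ with $|\{M_{\mathcal B_{r_k}}\chi_E>\alpha\}|\ge\Lambda_k|E|$ for a prescribed large $\Lambda_k$, partition $\mathbb R^n$ into a grid of cubes of side $r_k$, and observe that any $R\in\mathcal B_{r_k}(x)$ meets at most $3^n$ grid cubes; hence $M_{\mathcal B_{r_k}}\chi_E>\alpha$ forces $M_{\mathcal B_{r_k}}\chi_{E_i}>\alpha/3^n$ for one of the pieces $E_i=E\cap(\text{cube }i)$. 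Pigeonholing in $\sum_i|\{M_{\mathcal B_{r_k}}\chi_{E_i}>\beta\}|\ge\Lambda_k\sum_i|E_i|$, with $\beta:=\alpha/3^n$, produces a single piece $\tilde E_k$ supported in one grid cube (so $\operatorname{diam}\tilde E_k\lesssim r_k$) with $|\tilde S_k|\ge\Lambda_k|\tilde E_k|$, where $\tilde S_k=\{M_{\mathcal B_{r_k}}\chi_{\tilde E_k}>\beta\}$; note $\tilde S_k$ lies in the $r_k$-neighbourhood of that cube, so both sets fit in a cube of side $\asymp r_k\to0$.

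The crux is the \emph{covering} step, which replaces the homothety available in the Busemann--Feller setting. Using only translation invariance, I place $N_k=\lfloor\eta_k|Q|/|\tilde E_k|\rfloor$ independent random translates $\tilde E_k+v_j$, with $v_j$ uniform in $Q$, and set $F_k=\bigcup_j(\tilde E_k+v_j)$. For fixed $x$ the probability of being uncovered by the corresponding sets $\tilde S_k+v_j$ is $(1-|\tilde S_k|/|Q|)^{N_k}\le e^{-N_k|\tilde S_k|/|Q|}\le e^{-\eta_k\Lambda_k}$, so the expected uncovered measure is at most $e^{-\eta_k\Lambda_k}|Q|$ while $\mathbb E|F_k|\le N_k|\tilde E_k|\le\eta_k|Q|$. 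The point is that although each $\tilde S_k$ may fill only a tiny fraction of its own scale-$r_k$ cube, the enormous ratio $\Lambda_k=|\tilde S_k|/|\tilde E_k|$ lets one afford $N_k\asymp\eta_k|Q|/|\tilde E_k|$ heavily overlapping copies whose total covering budget $N_k|\tilde S_k|\asymp\eta_k\Lambda_k|Q|$ exceeds $|Q|$; overlap is harmless because extra mass only raises the averages defining $M_{\mathcal B_{r_k}}$. A first--moment argument then yields a \emph{deterministic} choice with $|F_k|\le2\eta_k|Q|$ and $|A_k|\ge(1-2e^{-\eta_k\Lambda_k})|Q|$, where $A_k:=\{M_{\mathcal B_{r_k}}\chi_{F_k}>\beta\}\supseteq\bigcup_j(\tilde S_k+v_j)$ (discarding a boundary layer of width $\asymp r_k$, which is negligible). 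Taking $\eta_k=4^{-k}$ and $\Lambda_k$ so large that $2e^{-\eta_k\Lambda_k}\le\frac14$ gives $|F_k|\le2\cdot4^{-k}|Q|$ and $|A_k|\ge\frac34|Q|$.

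Finally I assemble $F=\bigcup_k F_k\subseteq Q$ and $B=\limsup_k A_k$. Subadditivity gives $|F|\le\sum_k 2\cdot4^{-k}|Q|<\tfrac14|Q|$, while reverse Fatou gives $|B|\ge\limsup_k|A_k|\ge\frac34|Q|$, so $|B\setminus F|>0$. For each $x\in B$ there are infinitely many $k$ with $x\in A_k$, hence sets $R_k\in\mathcal B(x)$ with $\operatorname{diam}R_k<r_k\to0$ and $\frac{|F_k\cap R_k|}{|R_k|}>\beta$; since $F\supseteq F_k$ this forces $\frac{|F\cap R_k|}{|R_k|}>\beta$, so the upper derivative of $\int\chi_F$ at $x$ is at least $\beta$. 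As $\beta>0$ while $\chi_F(x)=0$ for almost every $x\in B\setminus F$, the basis fails to differentiate $\int\chi_F$ on the positive--measure set $B\setminus F$, contradicting the density basis hypothesis. I expect the covering step to be the main obstacle: without homothety one cannot rescale a single bad example across all small scales, and the resolution is precisely the observation above that it is a \emph{large Tauberian ratio}, rather than a geometrically full superlevel set, that the probabilistic covering consumes.
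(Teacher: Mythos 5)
Your proof is correct and follows essentially the same route as the paper: the sufficiency direction is a direct write-out of the de Guzm\'an density criterion that the paper cites (Theorem 1.1 of Chapter III of \cite{Gu}, valid for non--Busemann--Feller bases), and the necessity direction is the same contrapositive localization-plus-random-translation argument in the spirit of Stein's \emph{On limits of sequences of operators}, merely packaged with explicit first-moment estimates on a fixed cube rather than an appeal to the Borel--Cantelli lemma over all of $\mathbb{R}^n$. (One arithmetic slip: $\sum_{k\ge 1}2\cdot 4^{-k}=\tfrac23$, not $<\tfrac14$, but since $\tfrac34-\tfrac23>0$ the conclusion $|B\setminus F|>0$ survives unchanged.)
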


We remark that, as is explicitly indicated in Corollary \ref{t3} below, this theorem remains true in the context of uncentered (Busemann-Feller) bases.  It is a matter of considerable interest whether or not, for a translation invariant Busemann-Feller density basis $\mathcal{B}$, there exists a \emph{uniform} value of $r > 0$ such that $C_{\mathcal{B}_r}(\alpha) < \infty$ for all $0 < \alpha < 1$.  As we shall see in Theorem~\ref{t.nouniformr} below, this is not the case for centered bases and the analog of de Guzman's problem for centered translation invariant bases has a negative answer.

The second section of this paper will be devoted to a proof of this theorem.  The primary techniques of the proof will entail relatively standard geometric arguments in conjunction with techniques associated to Stein-Nikishin theory.  In the third section we will provide an example of a centered translation invariant density basis for which there does not exist a \emph{uniform} $r > 0$ such that $C_{\mathcal{B}_r}(\alpha) < \infty$ for every $0 < \alpha < 1$.  In this last section we will also provide suggested avenues for further research in this area.

\section{Proof of Theorem 2}
This section is devoted to a proof of Theorem \ref{t2}.  The arguments regarding necessity and sufficiency being quite different, we isolate them into separate lemmas.

\begin{lem}[Sufficiency]\label{l1}
Let $\mathcal{B} = \cup_{x \in \mathbb{R}^n}\mathcal{B}(x)$ be a centered translation invariant  differentiation basis.   Suppose that, given $0 < \alpha < 1$, there exists $r = r(\alpha) >0$ such that $C_{\mathcal{B}_r}(\alpha) < \infty$.  Then $\mathcal{B}$ is a density basis.

\begin{proof}
In Theorem 1.1 of Chapter III of \cite{Gu}, de Guzman proved that if $\mathcal{B}$ is a Busemann-Feller basis, then $\mathcal{B}$ is a density basis if and only if, for each $0 < \lambda < 1$, for each nested sequence $\{A_k\}$ of bounded measurable sets such that $|A_k| \rightarrow 0$ and for every nonincreasing sequence $\{r_k\}$ of positive real numbers such that $r_k \rightarrow 0$, we have
$$|\{x \in \mathbb{R}^n : \, M_{\mathcal{B}_{r_k}}\chi_{A_k}(x) > \lambda\}| \rightarrow 0\;.$$  A close inspection of his argument indicates that the proof does not rely at any point on $\mathcal{B}$ actually being a Busemann-Feller basis; the result holds for $\mathcal{B}$ being simply a differentiation basis.  As the hypotheses of the lemma guarantee that
$$|\{x \in \mathbb{R}^n :\, M_{\mathcal{B}_{r_k}}\chi_{A_k}(x) > \lambda\}| \leq C_{\mathcal{B}_r}(\lambda)|A_k|$$
provided $r_k$ is sufficiently small (in particular less than $r$), the desired result holds.
\end{proof}

\end{lem}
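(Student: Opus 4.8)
The plan is to reduce the sufficiency to de Guzman's characterization of density bases in terms of the decay of level sets for truncated maximal operators, and then to observe that the Tauberian hypothesis supplies exactly the bound that characterization demands. Concretely, the governing tool is the statement that a basis $\mathcal{B}$ is a density basis if and only if, for each $0 < \lambda < 1$, for every nested sequence $\{A_k\}$ of bounded measurable sets with $|A_k| \to 0$, and for every nonincreasing sequence $\{r_k\}$ of positive reals with $r_k \to 0$, one has $|\{x \in \mathbb{R}^n :\, M_{\mathcal{B}_{r_k}}\chi_{A_k}(x) > \lambda\}| \to 0$ as $k \to \infty$.

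The first step is to secure this characterization in the generality we need. De Guzman states and proves it for Busemann-Feller bases (Theorem 1.1 of Chapter III of \cite{Gu}), so the initial task is to revisit his argument and confirm that the Busemann-Feller hypothesis is never actually invoked. I expect this to be the genuine point of the proof: one must check that each geometric or covering step in his reduction uses only that $\mathcal{B}$ is a differentiation basis --- that every $\mathcal{B}(x)$ contains sets of arbitrarily small diameter --- and not the stronger property that membership of $x$ in a set $R \in \mathcal{B}(y)$ forces $R \in \mathcal{B}(x)$. Since the definition of a density basis is phrased identically for centered and uncentered bases, and since the truncated operators $M_{\mathcal{B}_{r_k}}$ make sense verbatim for a centered basis, I anticipate that the argument transfers without modification, but this verification is where care is required and is the main obstacle.

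With the characterization in hand, the remaining step is immediate. Fix $0 < \lambda < 1$ and, by hypothesis, choose $r = r(\lambda) > 0$ with $C_{\mathcal{B}_r}(\lambda) < \infty$. Given any nested $\{A_k\}$ with $|A_k| \to 0$ and nonincreasing $\{r_k\}$ with $r_k \to 0$, pick $K$ so large that $r_k < r$ for all $k \geq K$. For such $k$ the inclusion $\mathcal{B}_{r_k}(x) \subset \mathcal{B}_r(x)$ holds for every $x$, whence $M_{\mathcal{B}_{r_k}}\chi_{A_k} \leq M_{\mathcal{B}_r}\chi_{A_k}$ pointwise; combining this with the definition of the Tauberian constant yields
$$|\{x :\, M_{\mathcal{B}_{r_k}}\chi_{A_k}(x) > \lambda\}| \leq |\{x :\, M_{\mathcal{B}_r}\chi_{A_k}(x) > \lambda\}| \leq C_{\mathcal{B}_r}(\lambda)\,|A_k|\;.$$
Since $|A_k| \to 0$, the right-hand side tends to $0$, so the level sets decay as required, and the characterization forces $\mathcal{B}$ to be a density basis.
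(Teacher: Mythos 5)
Your proposal is correct and follows essentially the same route as the paper's own proof: invoke de Guzman's Theorem 1.1 of Chapter III after checking that the Busemann--Feller hypothesis is not actually used, then apply the Tauberian bound $|\{x : M_{\mathcal{B}_{r_k}}\chi_{A_k}(x) > \lambda\}| \leq C_{\mathcal{B}_r}(\lambda)|A_k|$ once $r_k < r$. The only difference is that you spell out the pointwise monotonicity $M_{\mathcal{B}_{r_k}}\chi_{A_k} \leq M_{\mathcal{B}_r}\chi_{A_k}$, which the paper leaves implicit.
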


We now consider the issue of necessity.  Note that \emph{any} Busemann-Feller differentiation basis $\mathcal{B}$ is a density basis provided that, given $0 < \alpha <1$, $C_{\mathcal{B}_r}(\alpha) < \infty$ for sufficiently small $r$ as is indicated by the above argument.  (In particular, translation or dilation invariance of the basis plays no role.)  It is important to recognize that the converse is in general false.  For example, one could let, say, $\mathcal{B}$ be the Busemann-Feller basis consisting of sets in $\mathbb{R}$ that are either intervals or are of the form $I_1 \cup I_2$ where, for some $j \in \mathbb{N}$, both $I_1$ and $I_2$ are open intervals in $(2^j, 2^{j+1})$ such that $2^{-j} < |I_1| = 2^j|I_2|$.   This is a density basis (as can be seen by the Lebesgue Differentiation Theorem), although $C_{\mathcal{B}_r}(\alpha) = \infty$ for every $0 < \alpha < 1$ and for  every $r > 0$.  For a more geometrically motivated example of this type, one could enumerate the squares in $\mathbb{R}^2$ of the form $[s, s+1] \times [t,t+1]$ for $s,t \in \mathbb{Z}$ by $Q_j$, enumerate the rational numbers by ${q_j}$, and let $\mathcal{B}$ be the Busemann-Feller basis consisting of all rectangles of length less than 1 that have center in $Q_j$ with slope lying in the set $\{q_1, \ldots, q_j\}$.    Note, however, that both of these examples lack both translation and dilation invariance.  In Theorem \ref{t1}, Busemann and Feller showed that necessity of finite Tauberian constants holds for a density basis provided the basis is translation and dilation invariant; our task is harder here because we lack dilation invariance.

 The reader may also find the presence of the ``$r$'' term in the necessity condition to be strange at first.  However, one must recognize that, in the absence of scale invariance, one might have for a translation invariant density basis $\mathcal{B} = \cup \mathcal{B}(x)$  that $C_{\mathcal{B}_{s}}(\alpha) = \infty$ for all $0 < \alpha < 1$ and sufficiently large values of $s > 0$. For example, let $\mathcal{B}(x)$ be the collection of all open sets in $\mathbb{R}^2$ containing $x$ of diameter greater than $\frac{1}{10}$ together with the collection of all balls in $\mathbb{R}^2$ containing $x$.  Given $0 < \alpha < 1$, it is only when we restrict ourselves to average over sufficiently small sets that we have a finite Tauberian constant.

\begin{lem}[Necessity]\label{l2}
Let $\mathcal{B} = \cup_{x \in \mathbb{R}^n}\mathcal{B}(x)$ be a centered translation invariant  differentiation basis.  If $\mathcal{B}$ is a density basis, then,  given $0 < \alpha < 1$, there exists $r = r(\alpha)$ such that $C_{\mathcal{B}_r}(\alpha) < \infty$.
\end{lem}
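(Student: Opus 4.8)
The plan is to argue by contraposition: assuming that there is some $\alpha_0 \in (0,1)$ for which $C_{\mathcal B_r}(\alpha_0) = \infty$ for every $r > 0$, I will manufacture a counterexample to the differentiation characterization of density bases recalled in the proof of Lemma~\ref{l1}, namely a threshold $\lambda$, a nested sequence of sets $\{A_k\}$ with $|A_k| \to 0$, and a nonincreasing sequence $r_k \to 0$ for which $|\{x : M_{\mathcal B_{r_k}}\chi_{A_k}(x) > \lambda\}|$ does not tend to $0$. The hypothesis $C_{\mathcal B_{1/k}}(\alpha_0) = \infty$ supplies, for each $k$ and each prescribed threshold $T_k$, a set $F_k$ of finite positive measure with $|\{M_{\mathcal B_{1/k}}\chi_{F_k} > \alpha_0\}| > T_k|F_k|$; the whole difficulty is to convert this unnormalized excess into the normalized, nested configuration demanded by the characterization. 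Translation invariance will be the engine that accomplishes this, and this is exactly the point at which the hypothesis of the lemma is used.

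First I would localize. Since $M_{\mathcal B_{1/k}}$ only averages over sets of diameter less than $1/k$, the value of $M_{\mathcal B_{1/k}}\chi_{F_k}(x)$ for $x$ in a grid cube $Q$ of side $1/k$ depends only on $F_k \cap 3Q$. Summing over the grid and using that $\sum_Q |F_k \cap 3Q| = 3^n|F_k|$, a pigeonhole argument produces a single cube $Q_k$ such that the localized set $H_k := F_k \cap 3Q_k$, which has diameter $O(1/k)$ and hence shrinks to a point as $k \to \infty$, still satisfies $|\{M_{\mathcal B_{1/k}}\chi_{H_k} > \alpha_0\}| > 3^{-n}T_k|H_k|$. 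Choosing $T_k$ large (say $T_k \ge 3^n 2^k$) makes the superlevel ratio $\rho_k$ of $H_k$ as large as desired.

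Next I would use translation invariance to boost the \emph{absolute} size of the superlevel set while keeping the measure of the set small. By translation invariance $M_{\mathcal B_{1/k}}(\chi_{H_k}(\cdot - v)) = (M_{\mathcal B_{1/k}}\chi_{H_k})(\cdot - v)$, and the superlevel set $S_k := \{M_{\mathcal B_{1/k}}\chi_{H_k} > \alpha_0\}$ is bounded, lying in a fixed dilate of $Q_k$. Placing disjoint translated copies of $H_k$ so that the corresponding copies of $S_k$ are disjoint, and taking just enough of them, yields a bounded gadget $D_k$ with $|D_k| \le 2^{-k}$ yet $|\{M_{\mathcal B_{1/k}}\chi_{D_k} > \alpha_0\}| \ge c_0$ for a fixed constant $c_0 > 0$; here the large ratio $\rho_k$ is precisely what lets the superlevel set retain fixed size while the measure of $D_k$ collapses. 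One then assembles the $D_k$ into a nested sequence $A_k := \bigcup_{j \ge k} D_j$, with the gadgets placed in pairwise separated regions, so that $A_k$ decreases, $|A_k| \le \sum_{j \ge k} 2^{-j} \to 0$, while monotonicity gives $|\{M_{\mathcal B_{1/k}}\chi_{A_k} > \alpha_0\}| \ge |\{M_{\mathcal B_{1/k}}\chi_{D_k} > \alpha_0\}| \ge c_0$. Taking $\lambda = \alpha_0$ and $r_k = 1/k$ then contradicts the characterization, so $\mathcal B$ cannot be a density basis.

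The main obstacle is the final assembly step. The gadgets $D_k$, although of small measure, are forced to occupy spatial regions whose diameter need not stay bounded: when $|S_k|$ is much smaller than $(1/k)^n$, many widely separated copies are required, so the naive union $\bigcup_{j \ge k} D_j$ is unbounded and does not literally meet the ``bounded $A_k$'' hypothesis of the characterization. Reconciling the growing spatial extent of the bad configurations with the nestedness and finiteness that are required is where the genuine work lies, and it is here that Stein--Nikishin theory enters: exploiting translation invariance through a maximal principle of zero--one-law type, one upgrades the persistent positive-measure overshoot into an honest almost-everywhere failure of the density property for a single function, which is what ultimately contradicts the assumption that $\mathcal B$ is a density basis. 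I expect the verification that this packaging respects the precise hypotheses, together with the check that no interference between gadgets at different scales occurs (which should follow because at stage $k$ only averages of diameter less than $1/k$ are taken), to be the most delicate part of the argument.
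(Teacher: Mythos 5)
Your first half tracks the paper closely: the contrapositive setup, the localization of the bad set to a neighborhood of a single grid cube by a pigeonhole argument (the paper does exactly this with unit cubes and the factor $3^{-n}$), and the use of translation invariance to trade a huge superlevel ratio for a configuration of small measure are all present in the paper's proof. The problem is the closing step, which you correctly identify as ``where the genuine work lies'' but then do not carry out. The route you actually sketch --- feeding nested sets $A_k = \bigcup_{j \ge k} D_j$ into de Guzm\'an's characterization from Lemma \ref{l1} --- does not close, for the reason you yourself notice: that characterization requires the $A_k$ to be \emph{bounded}, and since the number of disjoint translated copies needed at stage $j$ is of order $1/|S_j|$, which is uncontrolled, the gadgets $D_j$ (and hence the $A_k$) have unbounded spatial extent. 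Appealing instead to an unbounded version of de Guzm\'an's condition would be circular, since validity of that condition for unbounded sets is essentially equivalent to the finiteness of the Tauberian constants you are trying to establish. The final paragraph's invocation of ``a maximal principle of zero--one-law type'' names the right toolbox but is not an argument.

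What the paper does at this juncture is concrete and different from your packaging. For each fixed $k$ it produces a sequence of sets $\{R_{k,j}\}_j$ with $\sum_j |R_{k,j}| < 2^{-k}$ while $\sum_j |\{M_k \chi_{R_{k,j}} > \alpha\}| = \infty$, and then applies the Borel--Cantelli lemma (in Stein's random-translation form, using the translation invariance of the basis so that translating $R_{k,j}$ translates its superlevel set) to choose translations $\tau_{k,j}$ for which almost every point lies in infinitely many sets $\tau_{k,j}\{M_k\chi_{R_{k,j}} > \alpha\}$. The single set $E = \bigcup_{k,j} \tau_{k,j} R_{k,j}$ then has finite measure, yet for a.e.\ $x$ and every $k$ one finds $R \in \mathcal{B}(x)$ of diameter less than $2^{-k}$ with $|E \cap R|/|R| > \alpha$; this contradicts the density basis property directly (the upper density of a finite-measure set must vanish a.e.\ off the set), with no appeal to the bounded-$A_k$ characterization at all. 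Without this step, or an equivalent mechanism for converting your persistent positive-measure overshoot into an a.e.\ statement for one fixed set, the proof is incomplete.
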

\begin{proof}
We prove the contrapositive.   We remark that our strategy here, in particular in its use of the Borel-Cantelli Lemma, is strongly motivated by the paper \cite{SSeq} of E. M. Stein regarding limits of sequences of operators.  Suppose there existed $0 < \alpha < 1$ such that there did \emph{not} exist $0 < r$ such that $C_{\mathcal{B}_r}(\alpha) < \infty$.  Let $k$ be a positive integer.  For notational convenience, we set $M_k = M_{\mathcal{B}_{2^{-k}}}$.  Let $\ell$ also be a positive integer.   There exists a set $S_{k,\ell} \subset \mathbb{R}^n$ of finite measure such that $|\{x \in \mathbb{R}^n : M_k \chi_{S_{k,\ell}}(x) > \alpha\}| \geq  2^{\ell}|S_{k,\ell}|\;.$  Observe that, given a cube $Q \subset \mathbb{R}^n$ of measure 1, $M_{k}\chi_{S_{k,\ell} \cap 3Q} = M_k \chi_{S_{k,\ell}}$ on $Q$.  By the pigeonhole principle we recognize that there exists a cube $Q \subset \mathbb{R}^n$ of measure 1 such that $|\{x \in Q : M_k\chi_{S_{k,\ell}}(x) > \alpha]\}| \geq 3^{-n}2^{\ell}|S_k \cap 3Q|$, where $3Q$ denotes the concentric 3-fold dilate of $Q$.   Letting $E_{k,\ell} = S_{k,\ell} \cap 3Q$, we then have the existence of a set $E_{k,\ell} \subset \mathbb{R}^n$ such that
$$|E_{k,\ell}| \leq 3^n 2^{-\ell} \textup{ and } |\{x \in \mathbb{R}^n : M_k \chi_{E_{k,\ell}}(x) > \alpha\}| \geq 3^{-n}2^{\ell}|E_{k,\ell}|\;.$$   Hence there exists a sequence of sets $\{R_{k,j}\}_j$  such that
$$\sum_j |R_{k,j}| < \frac{1}{2^k} \; \textup{ although } \sum_j |\{x \in \mathbb{R}^n : M_k \chi_{R_{k,j}}(x) > \alpha\}| = \infty\;.$$    By the Borel-Cantelli Lemma, there exists a collection of translates $\{\tau_{k,j}\}_j$ such that a.e. point in $\mathbb{R}^n$ is contained in infinitely many sets of the form $\tau_{k,j} \{x \in \mathbb{R}^n : M_k \chi_{R_{k,j}}(x) > \alpha\}$.  Let now $E = \cup_{j,k} \tau_{k,j} R_{k,j}$.   We have that $|E| < \infty$, although for every $r > 0$ we have that
$M_{\mathcal{B}_r} \chi_E > \alpha$ a.e. on $\mathbb{R}^n$, since $M_{\mathcal{B}_r} \chi_E \geq M_k \chi_E$ for every $k$ such that $2^k < r$.  Accordingly, for a.e. $x \in \mathbb{R}^n$ we have that there exists a sequence $R_{x,j}$ of sets in $\mathcal{B}(x)$ of diameters tending to 0 such that $\frac{|E \cap R_{x,j}|}{|R_{x,j}|} > \alpha$.  Hence $\mathcal{B}$ cannot be a density basis.
\end{proof}

\begin{proof}[Proof of Theorem \ref{t2}]
This follows immediately from Lemmas \ref{l1} and \ref{l2}.
\end{proof}

\section{Further results and future directions}
One of the highlighted problems in de Guzman's \emph{Differentiation of Integrals in $\mathbb{R}^n$} is to find the appropriate analogue of Theorem \ref{t1} in the context of translation invariant Busemann-Feller bases.  As an immediate corollary of Theorem \ref{t2}, we have the following.

\begin{cor}\label{t3}
 Let $\mathcal{B}$ be a translation  invariant Busemann-Feller differentiation basis of sets in $\mathbb{R}^n$.  Then $\mathcal{B}$ is a density basis if and only if, given $0 < \alpha < 1$, there exists $ r = r(\alpha)>0$ such that $C_{\mathcal{B}_{r}}(\alpha) < \infty$.
\end{cor}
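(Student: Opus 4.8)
The plan is to deduce the corollary directly from Theorem \ref{t2}, the point being that a translation invariant Busemann-Feller basis is nothing but a special case of a centered translation invariant basis for which the centered and uncentered prescriptions for $M_{\mathcal{B}}$ produce the \emph{same} operator, and hence the same Tauberian constants. First I would record that any Busemann-Feller basis $\mathcal{B} = \cup_{x} \mathcal{B}(x)$ is already presented in the centered framework: each $\mathcal{B}(x)$ is a nonempty collection of bounded open sets containing $x$, and by hypothesis $\mathcal{B}$ is a differentiation basis that is translation invariant in the sense that $R \in \mathcal{B}(0)$ if and only if $x + R \in \mathcal{B}(x)$. Thus $\mathcal{B}$ satisfies every hypothesis of Theorem \ref{t2} when regarded as a centered translation invariant differentiation basis.

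The one thing genuinely requiring verification is that the Tauberian constants named in the corollary agree with those in Theorem \ref{t2}, and here the Busemann-Feller property is exactly what is needed. If $x \in R$ and $R \in \mathcal{B}$ (that is, $R \in \mathcal{B}(y)$ for some $y$), then $R \in \mathcal{B}(x)$, while conversely $R \in \mathcal{B}(x)$ gives $R \in \mathcal{B}$; therefore the family over which the supremum is taken in
$$M_{\mathcal{B}} f(x) = \sup_{x \in R \in \mathcal{B}(x)} \frac{1}{|R|}\int_R |f|$$
coincides with $\{R \in \mathcal{B} : x \in R\}$, so the centered and uncentered maximal operators are literally the same function. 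I would then check that this identification survives restriction to small sets: the Busemann-Feller property is inherited by $\mathcal{B}_r$, since if $x \in R \in \mathcal{B}(y)$ with $\textup{diam } R < r$ then $R \in \mathcal{B}(x)$ and still $\textup{diam } R < r$. Consequently $M_{\mathcal{B}_r}$ is unambiguous and the quantities $C_{\mathcal{B}_r}(\alpha)$ computed in the two frameworks coincide.

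With these identifications in place the corollary follows at once: Theorem \ref{t2} asserts that the centered basis $\mathcal{B}$ is a density basis if and only if for each $0 < \alpha < 1$ there exists $r = r(\alpha) > 0$ with $C_{\mathcal{B}_r}(\alpha) < \infty$, and the preceding discussion shows that both the notion of density basis (which depends only on $\mathcal{B}$ as a collection of sets, not on any centering data) and the constants $C_{\mathcal{B}_r}(\alpha)$ are unchanged when $\mathcal{B}$ is instead viewed as a Busemann-Feller basis. I do not expect any substantive obstacle in this argument; the only point demanding care is the bookkeeping between the two definitions of $M_{\mathcal{B}}$, and the Busemann-Feller hypothesis is precisely the ingredient that reconciles them, which is why the result is stated as an immediate corollary.
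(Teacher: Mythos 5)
Your proposal is correct and matches the paper's intent exactly: the paper offers no separate argument, simply declaring the corollary immediate from Theorem~\ref{t2}, and your write-up supplies precisely the bookkeeping (a translation invariant Busemann--Feller basis is a centered translation invariant basis, the centered and uncentered maximal operators coincide, and this persists under restriction to $\mathcal{B}_r$) that justifies the word ``immediate.''
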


An intriguing issue is, if $\mathcal{B}$ is a translation invariant Busemann-Feller density basis, whether or not there exits a value of $r$ such that $C_{\mathcal{B}_r}(\alpha) < \infty$ for \emph{all} $0 < \alpha < 1$.  That this is so was in fact suggested by de Guzman in \cite{Gu}.  We highlight this issue as follows:

\begin{problem}\label{prob.BFunif}
Let $\mathcal{B}$ be a  translation invariant Busemann-Feller density basis.  Must there exist $r>0$ such that $C_{\mathcal{B}_r}(\alpha) < \infty$ for all $0 < \alpha < 1$ ?
\end{problem}

This appears to be a very difficult problem.  We are pleased to be able to observe, however, that the above conclusion is \emph{false} in the general case of $\mathcal{B} = \cup_{x \in \mathbb{R}^n} \mathcal{B}(x)$ being a centered translation invariant density basis.

\begin{thm}\label{t.nouniformr}
There exists a centered translation invariant density basis  $\mathcal{B} = \cup_{x \in \mathbb{R}^n} \mathcal{B}(x)$ such that there does not exists $0 < r$ satisfying the condition $C_{\mathcal{B}_r} (\alpha) < \infty$ for all $0 < \alpha < 1$.
\end{thm}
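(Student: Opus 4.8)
The plan is to construct the basis in $\mathbb{R}^2$ out of thin rectangles (``tubes'') having the distinguished center point at a vertex, with both the set of admissible directions and the admissible eccentricity depending on the scale $2^{-k}$ at which one is working. Before describing the schedule it is worth isolating the structural constraint that any such example must satisfy, since it dictates the whole design. If $\mathcal{B}$ is a density basis then, by Theorem~\ref{t2}, for every $\alpha$ there is an $r$ with $C_{\mathcal{B}_r}(\alpha)<\infty$; since $M_{\mathcal{B}_{2^{-k}}}\le M_{\mathcal{B}_r}$ whenever $2^{-k}<r$ (the smaller-diameter family is contained in the larger one), this forces the single-scale Tauberian constants $C_{\mathcal{B}_{2^{-k}}}(\alpha)$ to stay \emph{bounded} in $k$ for each fixed $\alpha$. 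Consequently the blow-up of $C_{\mathcal{B}_r}(\alpha)$ demanded by Theorem~\ref{t.nouniformr} cannot come from any individual scale; it must be produced by the genuine interaction of the infinitely many scales lying below $r$. This rules out the naive attempts (all directions at each scale, or a fixed direction set at all scales), and points to a directional mechanism in which each fixed scale is harmless but the accumulation over scales is not.

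Accordingly, first I would fix a sequence of finite direction sets $\Omega_k\subset\mathbb{S}^1$ and eccentricities $E_k$, and let $\mathcal{B}(0)$ consist of all rectangles with one vertex at the origin, with sidelengths $2^{-k}$ and $2^{-k}/E_k$ and long side pointing in some $\theta\in\Omega_k$, $k\ge 1$ (one may also adjoin all discs centered at the origin without affecting anything below). Setting $\mathcal{B}(x)=x+\mathcal{B}(0)$ makes $\mathcal{B}$ translation invariant, taking the vertex as the distinguished point makes it centered, and the diameters $2^{-k}\to 0$ make it a differentiation basis. The design principle is to choose $\Omega_k$ and $E_k$ so that (i) for each fixed $k$ the one-scale operator $M_{\mathcal{B}_{2^{-k}}}$, which involves only finitely many directions, has Tauberian constants bounded uniformly in $k$ at each level; while (ii) every tail family $\bigcup_{k\ge k_0}\Omega_k$ is so poorly distributed (non-lacunary) that the associated tube maximal operator fails a weak-type bound at all sufficiently small levels, with the threshold level tending to $0$ as $k_0\to\infty$.

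Then I would establish the two halves separately. For the density statement I would invoke the sufficiency direction (Lemma~\ref{l1}, via Theorem~\ref{t2}): given $\alpha$, it suffices to produce a single $r=2^{-k_0(\alpha)}$ with $C_{\mathcal{B}_r}(\alpha)<\infty$, which reduces to a Cordoba--Fefferman / Nagel--Stein--Wainger type estimate showing that, once enough initial scales are discarded, the remaining tube family is effectively lacunary at the angular resolution permitted by its eccentricities and hence satisfies a uniform weak-type, and so Tauberian, bound at level $\alpha$. For the failure of a uniform $r$ I would argue, for each fixed $r=2^{-k_0}$, by exhibiting test sets: using only the directions and eccentricities available at scales $2^{-k}$, $k\ge k_0$, I would run a Perron-tree / Besicovitch-type overlapping construction spread across these scales to manufacture sets $E_m$ with $|\{M_{\mathcal{B}_r}\chi_{E_m}>\alpha\}|/|E_m|\to\infty$ for a level $\alpha=\alpha(k_0)$ that can be taken to tend to $0$ as $k_0\to\infty$; a Borel--Cantelli placement of translates, exactly as in the proof of Lemma~\ref{l2}, would convert the divergence of the relevant counting sum into the desired statement. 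Together these yield a density basis for which $C_{\mathcal{B}_r}(\alpha)<\infty$ holds only when $r$ is small relative to $\alpha$, so that no single $r$ works for all $0<\alpha<1$.

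The main obstacle, I expect, is calibrating the single schedule $(\Omega_k,E_k)$ so that both inequalities hold at once: the same thinness that must survive at arbitrarily small scales (to keep $C_{\mathcal{B}_r}(\alpha)$ infinite at low levels for \emph{every} $r$) must nonetheless be tame enough that, at each fixed level $\alpha$, discarding finitely many scales restores a finite Tauberian constant (to secure the density basis property). Quantitatively this is the requirement that the phase boundary ``$C_{\mathcal{B}_r}(\alpha)<\infty$ iff $r<\rho(\alpha)$'' satisfy $\rho(\alpha)\to 0$ as $\alpha\to 0$, and matching the directional upper bound of step (i) against the multiscale lower bound of step (ii) along this boundary is where the real work lies. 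A secondary difficulty is that, because thin rectangles persist at every small scale, the density basis property cannot be read off from any pointwise Lebesgue-type argument and must instead be obtained through the Tauberian criterion of Theorem~\ref{t2} together with the directional estimates above.
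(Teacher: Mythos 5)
There is a genuine gap, and it begins with your structural premise. You assert that, because Theorem \ref{t2} forces $C_{\mathcal{B}_{r(\alpha)}}(\alpha)<\infty$ for some $r(\alpha)>0$, the blow-up of $C_{\mathcal{B}_r}(\alpha)$ ``cannot come from any individual scale.'' This is a quantifier error: the density basis property only controls the scales of diameter \emph{below} $r(\alpha)$ and says nothing about the scales with diameters in $[r(\alpha),r)$; a single one of those may perfectly well have infinite Tauberian constant at level $\alpha$. Since in the theorem $\alpha$ is chosen \emph{after} $r$, one may take $\alpha$ so small that such an uncontrolled scale exists below every fixed $r$. The paper's example exploits exactly this and is one-dimensional: $\mathcal{B}_k(0)$ consists of the sets $(-\delta/2,\delta/2)\cup(s,s+2^{-k}\delta)$ with $0<\delta<2^{-2^k}$ and $s$ ranging over an interval of length $2^{-2^k}$ at distance $2^{-k}$ from the origin. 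The remote piece carries a fraction $2^{-k}/(1+2^{-k})$ of the measure of the set, so for $\alpha>2\cdot 2^{-k}$ it is negligible and the operator is dominated by the Hardy--Littlewood maximal function (giving $C_{\mathcal{B}_k}(\alpha)\le 6/\alpha$ and, via Theorem \ref{t2}, the density basis property), while for $\alpha<2^{-k}/(1+2^{-k})$ the remote piece alone, tested on $E=[0,\delta']$ with $\delta'$ arbitrarily small, makes $M_{\mathcal{B}_k}\chi_E>\alpha$ on a set of fixed measure $2^{-2^k}$, so $C_{\mathcal{B}_k}(\alpha)=\infty$ at that single scale. No interaction between scales is involved.

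Beyond the faulty premise, the construction you propose is not actually carried out. The direction sets $\Omega_k$ and eccentricities $E_k$ are never specified, and both halves of the argument are only named: a Nagel--Stein--Wainger type bound for the tail family, and a ``Perron tree spread across scales'' for the blow-up. The latter is not a routine adaptation---the classical Besicovitch overlap needs many directions at comparable lengths, whereas in your scheme each direction is available only at its own scale and eccentricity, and whether such a multi-scale family admits a Kakeya-type compression is itself a delicate question that you defer as ``where the real work lies.'' You also invoke a Borel--Cantelli placement of translates to conclude $C_{\mathcal{B}_r}(\alpha)=\infty$; that step is neither needed (the Tauberian constant is infinite as soon as one exhibits test sets $E_m$ with unbounded ratio) nor safe, since the Borel--Cantelli argument of Lemma \ref{l2} is precisely the mechanism that destroys the density basis property when the blow-up occurs at a fixed level $\alpha$ for all $r$. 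As it stands the proposal identifies the correct target (a phase boundary $\rho(\alpha)\to 0$ as $\alpha\to 0$) but proves neither inequality, and its guiding design principle would steer you away from the simple single-scale mechanism that actually works.
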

\begin{proof}
 For each positive integer $k$, let
$$\mathcal{B}_k(0) \coloneqq \{(-\frac{\delta}{2},\frac{\delta}{2}) \cup (s, s + 2^{-k}\delta) :\,  0 < \delta < 2^{-2^k} \,\,\textup{ and }\,\, s \in (2^{-k}, 2^{-k} + 2^{-2^k})\}\;$$
and let $\mathcal{B}_k(x) \coloneqq \{x + R : R \in \mathcal{B}_k(0)\}.$  Now, let $0 < \alpha < 1$ and suppose that $k$ is a positive integer such that $2^{-k} < \frac{\alpha}{2}$.  If $E$ is a measurable set in $\mathbb{R}$ and $M_{\mathcal{B}_k}(x)\chi_E(x) > \alpha$, then there exists a set $R \in \mathcal{B}_k$ of the form $R = R_1 \cup R_2$, where $R_1$ and $R_2$ are disjoint intervals with $|R_2| = 2^{-k}|R_1|$, $x \in R_1$, and
$$\frac{|E \cap (R_1 \cup R_2)|}{|R_1 \cup R_2|} > \alpha\;.$$
Note that
$$\frac{|E \cap R_1|}{|R_1|} \geq \left(\frac{|E \cap (R_1 \cup R_2)| - |R_2|}{|R_1 \cup R_2|}\right) \frac{|R_1 \cup R_2|}{|R_1|} \geq (\alpha - 2^{-k})(1 + 2^{-k}) > \frac{\alpha}{2}\;. $$
Accordingly, $x \in \{x \in \mathbb{R} : M_{\textup{HL}}\chi_E (x) > \frac{\alpha}{2}\}$, where $M_{\textup{HL}}$ denotes the uncentered Hardy-Littlewood maximal operator.  As $M_{\textup{HL}}$ has a weak-type $(1,1)$ bound of 3, we then have $$C_{B_k}(\alpha) \leq \frac{6}{\alpha}$$ provided $2^{-k} < \frac{\alpha}{2}$.  Accordingly, by Theorem \ref{t2} we have that, setting $\mathcal{B}(x) = \cup_k \mathcal{B}_{k}(x)$, $\mathcal{B} = \cup_{x \in \mathbb{R}} \mathcal{B}(x)$ is a density basis.  However, suppose $r > 0$.  Set $k$ such that $\frac{2^{-k}}{1 + 2^{-k}} > \alpha$.  Setting $E = [0, \delta]$ for very small $\delta$ (in particular, less than $2^{-2^k}$, we have that $M_{\mathcal{B}_k}\chi_E > \alpha$ on an interval of length $2^{-2^k}$.  As $\delta > 0$ is arbitrarily small, we see then that for that value of $k$ we have that $C_{\mathcal{B}_k}(\alpha) = \infty$.  As all the sets in $\mathcal{B}_k$ have diameter less that $2^{-k+1}$, we see there is not a uniform value of $r$ for which $C_{\mathcal{B}_r}(\alpha) < \infty$ for all $\alpha > 0$.
\end{proof}

\begin{remark}  To the best of our knowledge, the density basis $\mathcal{B}(x)= \cup_{x \in \mathbb{R}^n} \mathcal{B}(x)$ constructed in the proof of the above theorem is the first known example of a centered translation invariant density basis not associated to a maximal operator satisfying a weak type $(p,p)$ or other nontrivial weak type estimate.
\end{remark}

It is worthwhile to consider translation invariant Busemann-Feller bases $\mathcal{B}$ such that $C_{\mathcal{B}}(\alpha_0) = \infty$ although $C_{\mathcal{B}}(\alpha_1)< \infty$ for some $0 < \alpha_0 < \alpha_1$.  An example of such a basis is
$$\mathcal{B} \coloneqq \{R = I_1 \cup I_2 : I_1 \textup{ and } I_2 \textup{ are intervals in $\mathbb{R}^1$ and } |I_1| = |I_2|\}\;.$$

One can show that for this basis $\mathcal{B}$ we have $C_\mathcal{B}(\alpha) < \infty$ for every $\frac{1}{2} < \alpha$ although \mbox{$C_{\mathcal{B}}(1/2) = \infty$.}  (Applications associated to this basis may be found in \cite{bh, hs}.)   Interestingly enough, we have been unable to find any translation invariant basis $\mathcal{B}$ such that $C_\mathcal{B}(1/2)$ was finite such that $C_{\mathcal{B}}(\alpha)$ was not finite for \emph{all} $0 < \alpha < 1$.  We highlight this as follows:

\begin{problem}If $\mathcal{B}$ is a translation invariant basis Busemann-Feller basis such that $C_\mathcal{B}(1/2) < \infty$, must $C_{\mathcal{B}}(\alpha) < \infty$ for all $0 < \alpha < 1$?
\end{problem}

If the above were true, we would have an affirmative answer to Problem~\ref{prob.BFunif}.  The reasoning is as follows.  Let $\mathcal{B}$ be a translation invariant Busemann-Feller density basis.  By Theorem \ref{t3}, we have that $C_{\mathcal{B}_r}(1/3) < \infty$ for some $r > 0$.   Hence for that same value of $r$ we would necessarily have
$C_{\mathcal{B}_r}(\alpha) < \infty$ for all $0 < \alpha < 1$.

\begin{bibsection}
\begin{biblist}


\bib{bh}{article}{

author = {Beznosova, O.},
author = {Hagelstein, P.},
journal = {Coll. Math.},
volume = {134},
pages = {235--243},
year = {2014}
review = {\MR{3194408}},
}

\bib{busemannfeller1934}{article}{
author = {Busemann, H.},
author = {Feller, W.},
journal = {Fundamenta Mathematicae},
number = {1},
pages = {226-256},
publisher = {Institute of Mathematics Polish Academy of Sciences},
title = {Zur Differentiation der Lebesgueschen Integrale},
url = {http://eudml.org/doc/212688},
volume = {22},
year = {1934},
}

\bib{Gu}{article}{
   author={de Guzm{\'a}n, M.},
   title={Differentiation of Integrals in ${\bf R}^{n}$},
   book={
      publisher={Springer-Verlag},
      place={Berlin-New York},
   },
   date={1975},
   pages={xii+266. Lecture Notes in Math., Vol. 481},
   review={\MR{0457661 (56 \#15866)}},
}

\bib{hs}{article}{
   author={Hagelstein, P. A.},
   author={Stokolos, A.},
   title={Tauberian conditions for geometric maximal operators},
   journal={Trans. Amer. Math. Soc.},
   volume={361},
   date={2009},
   number={6},
   pages={3031--3040},
   issn={0002-9947},
   review={\MR{2485416 (2010b:42023)}},
}

\bib{SteinHA}{book}{
   author={Stein, Elias M.},
   title={Harmonic Analysis: Real-Variable Methods, Orthogonality, and Oscillatory Integrals},
   series={Princeton Mathematical Series, No. 43},
   publisher={Princeton University Press},
   place={Princeton, N.J.},
   date={1993},
   pages={xiv+695},
   review={\MR{1232192(95c:42002)}},}


\bib{SSeq}{article}{
author={Stein, E. M.},
title={On limits of sequences of operators},
journal={Ann. of Math.},
volume={74},
pages={140--170}
review={\MR{0125392 (23 \#A2695)}},
}

\end{biblist}
\end{bibsection}

\end{document}